%%
%% Copyright 2007, 2008, 2009 Elsevier Ltd
%%
%% This file is part of the 'Elsarticle Bundle'.
%% ---------------------------------------------
%%
%% It may be distributed under the conditions of the LaTeX Project Public
%% License, either version 1.2 of this license or (at your option) any
%% later version.  The latest version of this license is in
%%    http://www.latex-project.org/lppl.txt
%% and version 1.2 or later is part of all distributions of LaTeX
%% version 1999/12/01 or later.
%%
%% The list of all files belonging to the 'Elsarticle Bundle' is
%% given in the file `manifest.txt'.
%%

%% Template article for Elsevier's document class `elsarticle'
%% with numbered style bibliographic references
%% SP 2008/03/01
%%
%%
%%
%% $Id: elsarticle-template-num.tex 4 2009-10-24 08:22:58Z rishi $
%%
%%
\documentclass[preprint,12pt]{elsarticle}

%% Use the option review to obtain double line spacing
%% \documentclass[preprint,review,12pt]{elsarticle}

%% Use the options 1p,twocolumn; 3p; 3p,twocolumn; 5p; or 5p,twocolumn
%% for a journal layout:
%% \documentclass[final,1p,times]{elsarticle}
%% \documentclass[final,1p,times,twocolumn]{elsarticle}
%% \documentclass[final,3p,times]{elsarticle}
%% \documentclass[final,3p,times,twocolumn]{elsarticle}
%% \documentclass[final,5p,times]{elsarticle}
%% \documentclass[final,5p,times,twocolumn]{elsarticle}

%% if you use PostScript figures in your article
%% use the graphics package for simple commands
%% \usepackage{graphics}
%% or use the graphicx package for more complicated commands
%% \usepackage{graphicx}
%% or use the epsfig package if you prefer to use the old commands
%% \usepackage{epsfig}

%% The amssymb package provides various useful mathematical symbols
\usepackage{amssymb, url}
\usepackage{lscape}
\usepackage{color}

%% The amsthm package provides extended theorem environments
%% \usepackage{amsthm}

%% The lineno packages adds line numbers. Start line numbering with
%% \begin{linenumbers}, end it with \end{linenumbers}. Or switch it on
%% for the whole article with \linenumbers after \end{frontmatter}.
%% \usepackage{lineno}

%% natbib.sty is loaded by default. However, natbib options can be
%% provided with \biboptions{...} command. Following options are
%% valid:

%%   round  -  round parentheses are used (default)
%%   square -  square brackets are used   [option]
%%   curly  -  curly braces are used      {option}
%%   angle  -  angle brackets are used    <option>
%%   semicolon  -  multiple citations separated by semi-colon
%%   colon  - same as semicolon, an earlier confusion
%%   comma  -  separated by comma
%%   numbers-  selects numerical citations
%%   super  -  numerical citations as superscripts
%%   sort   -  sorts multiple citations according to order in ref. list
%%   sort&compress   -  like sort, but also compresses numerical citations
%%   compress - compresses without sorting
%%
%% \biboptions{comma,round}

% \biboptions{}

\journal{}

\begin{document}

\begin{frontmatter}

%% Title, authors and addresses

%% use the tnoteref command within \title for footnotes;
%% use the tnotetext command for the associated footnote;
%% use the fnref command within \author or \address for footnotes;
%% use the fntext command for the associated footnote;
%% use the corref command within \author for corresponding author footnotes;
%% use the cortext command for the associated footnote;
%% use the ead command for the email address,
%% and the form \ead[url] for the home page:
%%
%% \title{Title\tnoteref{label1}}
%% \tnotetext[label1]{}
%% \author{Name\corref{cor1}\fnref{label2}}
%% \ead{email address}
%% \ead[url]{home page}
%% \fntext[label2]{}
%% \cortext[cor1]{}
%% \address{Address\fnref{label3}}
%% \fntext[label3]{}

\title{On the $\mathbb{F}_2$-linear relations of Mersenne Twister pseudorandom number generators}

%% use optional labels to link authors explicitly to addresses:
%% \author[label1,label2]{<author name>}
%% \address[label1]{<address>}
%% \address[label2]{<address>}

\author[harase]{Shin Harase\corref{cor1}}
\ead{harase@craft.titech.ac.jp}
\address[harase]{Graduate School of Innovation Management, Tokyo Institute of Technology, 
W9-115, 2-12-1 Ookayama, Meguro-ku,Tokyo, 152-8550, Japan.}

\cortext[cor1]{Corresponding author}

\begin{abstract}
%% Text of abstract

Sequence generators obtained by linear recursions over the two-element field $\mathbb{F}_2$\textcolor{red}{, 
i.e., $\mathbb{F}_2$-linear generators,} are widely used as pseudorandom number generators. 
For example, the Mersenne Twister MT19937 is one of the most successful applications. 
An advantage of such generators is that we can assess them \textcolor{blue}{quickly} by using theoretical criteria, 
such as the dimension of equidistribution with $v$-bit accuracy. 
To compute these dimensions, several \textcolor{red}{polynomial-time lattice reduction algorithms} have been proposed 
\textcolor{red}{in the case of $\mathbb{F}_2$-linear generators}. 

In this paper, \textcolor{red}{in order to assess non-random bit patterns 
in dimensions that are higher than the dimension of equidistribution with $v$-bit accuracy,}
we focus on 
the relationship between points in the Couture--L'Ecuyer dual lattices and 
$\mathbb{F}_2$-linear relations 
on the most significant $v$ bits of output sequences, and  
consider a new figure of merit $N_v$ based on the minimum weight of 
$\mathbb{F}_2$-linear relations whose degrees are minimal for $v$.
%\textcolor{red}{, and apply it to MT19937}. 
\textcolor{red}{Next, we numerically} show that 
MT19937 has low-weight $\mathbb{F}_2$-linear relations in dimensions higher than 623, 
and show that some output vectors with specific lags are rejected 
or have small $p$-values in birthday spacings tests. We also report that 
some variants of Mersenne Twister, such as WELL generators, 
are significantly improved from the perspective of $N_v$. 
\end{abstract}

\begin{keyword}
%% keywords here, in the form: keyword \sep keyword
Random number generation \sep Lattice structure \sep Statistical test
%% MSC codes here, in the form: \MSC code \sep code
%% or \MSC[2008] code \sep code (2000 is the default)
\MSC[2010] 65C10 \sep 11K45
\end{keyword}

\end{frontmatter}

%%
%% Start line numbering here if you want
%%
% \linenumbers

\newtheorem{theorem}{Theorem}
\newtheorem{lemma}[theorem]{Lemma}
\newtheorem{corollary}[theorem]{Corollary}
\newdefinition{definition}{Definition}
\newdefinition{remark}{Remark}
\newproof{proof}{Proof}
\newtheorem{proposition}[theorem]{Proposition}

%% main text
\section{Introduction}\label{sec:intro}

The {\it Mersenne Twister} MT19937 is a pseudorandom number generator 
developed by Matsumoto and Nishimura \cite{MT19937}. 
This generator has the following advantages: 
(i) Its generation speed is very fast; 
(ii) it has a large period of $2^{19937}-1$; 
(iii) it has high-dimensional equidistribution property (i.e., $623$-dimensionally equidistributed).
%Thus, MT19937 is currently one of the most widely used pseudorandom number generators in Monte Carlo simulations.  
%For example, it is the default generator for a recent version of R (see \cite{R2012}), 
%it is presented in the GNU Scientific Library, and it is implemented in a large number of programming languages. 
The algorithm of the Mersenne Twister is based on a linear recurrence relation over the two-element field $\mathbb{F}_2$. 
For such a class of generators (so-called {\it $\mathbb{F}_2$-linear generators}), the following two quality criteria 
are well-known: 
(i) the dimension of equidistribution with $v$-bit accuracy $k(v)$ for each $v$ ($1 \leq v \leq w$ and $w$ indicates the word size of machines) \textcolor{red}{is large enough} and 
(ii) the number $N_1$ of nonzero terms in a characteristic polynomial \textcolor{red}{is large enough (see Section~\ref{sec:quarity criteria} for details)}. 
From this perspective, \textcolor{blue}{MT19937 was %considered to be 
much superior to all other classical pseudorandom number generators when it appeared, 
and it has since be the most widely used generator in Monte Carlo simulations. }

\textcolor{red}{However, it may not be sufficient to use only the above two criteria for assessing pseudorandom number generators. 
A motivation of this paper is to detect non-random bit patterns in dimensions 
that are higher than $k(v)$. 
In such dimensions, we always have certain bits of output whose sum becomes 0 over $\mathbb{F}_2$, 
in the case of $\mathbb{F}_2$-linear generators. 
Such relations are said to be {\it $\mathbb{F}_2$-linear relations}, but they are usually hidden. 
In fact, when there exist $\mathbb{F}_2$-linear relations whose numbers of terms are small (e.g, the number of terms $\leq 6$) 
in a low-dimensional projection, it is likely to observe some deficiencies for small sample sizes. 
As a previous work, Matsumoto and Nishimura \cite{MR1958868} gave theoretical justification for this fact in terms of coding theory.
Thus, we should avoid such $\mathbb{F}_2$-linear relations.}

\textcolor{red}{In this paper, in order to assess non-random bit patterns, 
we focus on the $\mathbb{F}_2$-linear relations whose degrees are minimal for the most significant $v$ bits given, 
which correspond to the shortest vectors in the Couture--L'Ecuyer dual lattice \cite{CL2000} for computing $k(v)$, 
and we develop a new figure of merit $N_v$ based on the minimum number of terms of the $\mathbb{F}_2$-linear relations.} 
The value $N_v$ can be considered as a quality \textcolor{red}{criterion} in dimensions higher than $k(v)$ 
and as a multi-dimensional generalization of $N_1$. 
\textcolor{blue}{Next,} we assess the Mersenne Twister MT19937 and its variants in terms of $N_v$'s, 
and show that $N_v$'s of MT19937 are small, 
relative to the WELL generators \cite{PannetonLM06}. 
We also report that MT19937 has some deviations in birthday spacings test 
\cite{Marsaglia1985, Knuth:1997:ACP:270146, MR1823110, MR2404400} 
for non-successive output values\textcolor{red}{,
in accordance with the existence of $\mathbb{F}_2$-linear relations. 
As far as the author knows, 
there has been no report of deviations of MT19937 
except for linear complexity tests and poor initialization until now (see \cite{MR2404400, MatsumotoSHN06, PannetonLM06}).}

The rest of this paper is organized as follows. 
In Section~\ref{sec:F2-linear generators}, we recall \textcolor{red}{the} framework of $\mathbb{F}_2$-linear generators. 
In Section~\ref{sec:quarity criteria}, we explain the terminologies of $k(v)$ and $N_1$. 
In Section~\ref{sec:lattice structure}, \textcolor{red}{ 
we briefly survey the Couture--\textcolor{blue}{L'Ecyuer} dual lattice method \cite{CL2000} for computing $k(v)$, 
which will be used in later sections.} 
In Section~\ref{sec:criterion}, we show the relationship between 
$\mathbb{F}_2$-linear relations and points in the Couture--L'Ecuyer dual lattices, 
define a new figure of merit $N_v$, 
and give an algorithm for computing $N_v$ using Gray codes. 
In Section~\ref{sec:linear relation for MT}, 
we analyze the Mersenne Twister MT19937 in terms of both $N_v$'s and $\mathbb{F}_2$-linear relations. 
In Section~\ref{sec:birthday spacings test}, we report some deviations of MT19937 
in the birthday spacings tests with selected lags. 
Section~\ref{sec:other generators} is devoted to the analysis of other $\mathbb{F}_2$-linear generators, 
such as the WELL generators. We also introduce a new tempering parameter of MT19937 
in order to optimize $k(v)$ as an improvement of the author's previous work \cite{Harase2009}. 
Our conclusions are presented in Section~\ref{sec:conclusions}.

\section{$\mathbb{F}_2$-linear generators} \label{sec:F2-linear generators}
Mersenne Twister generators belong to a general class of pseudorandom number generators based on the following
matrix recurrences \textcolor{red}{over the two-element field $\mathbb{F}_2 := \{ 0, 1 \}$}:
\begin{eqnarray}
\mathbf{x}_i & := & \mathbf{Ax}_{i-1}, \label{eqn:transition}\\
\mathbf{y}_i & := & \mathbf{Bx}_i, \label{eqn:transformation}\\
u_i & := & \sum_{l = 1}^w y_{i, l-1}2^{-l} = 0.y_{i,0}y_{i,1} \cdots y_{i, w-1}, \label{eqn:real} 
\end{eqnarray}
where $\mathbf{x}_i = {}^t(x_{i,0}, \ldots, x_{i, p-1}) \in \mathbb{F}_2^p$ is 
the $p$-bit {\it state vector} at step $i$ \textcolor{red}{(${}^t$ denotes the {\it transpose} of a vector)}; 
$\mathbf{y}_i = {}^t(y_{i, 0}, \ldots, y_{i, w-1}) \in \mathbb{F}^w$ is the $w$-bit {\it output vector} at step $i$; 
$p$ and $w$ are positive integers \textcolor{red}{($w$ indicates the word size of machines)}, 
$\mathbf{A}$ is a $p \times p$ {\it transition matrix} with elements in $\mathbb{F}_2$, 
and $\mathbf{B}$ is a $w \times p$ {\it output transformation matrix} with elements in $\mathbb{F}_2$. 
\textcolor{red}{ 
We identify a $w$-dimensional vector $\mathbf{y}_i$ and a binary expansion $u_i$  
with an unsigned $w$-bit binary integer and  a real number in the interval $[0,1)$, respectively. 
The output sequence $\{ u_i \}$ is supposed 
to imitate independent random variables that are uniformly distributed over $[0, 1)$.}
%All operations in (\ref{eqn:transition}) and (\ref{eqn:transformation}) are performed in $\mathbb{F}_2$, 
%i.e., modulo $2$. 
This framework is said to be the {\it $\mathbb{F}_2$-linear generator}. 
\textcolor{red}{We refer the reader to \cite{LP2009, MatsumotoSHN06} for details.}

Let $P(z) := \det(\mathbf{I}z - \mathbf{A})$ be the {\it characteristic polynomial} of $\mathbf{A}$. 
The recurrence (\ref{eqn:transition}) has the period length $2^p-1$ 
(its maximal possible value) if and only if $P(z)$ is a primitive polynomial modulo $2$ 
(see \cite{Niederreiter:book, Knuth:1997:ACP:270146}). 
When this maximum is reached, 
we say that the $\mathbb{F}_2$-linear generator has the {\it maximal period}. 
For simplicity, we assume the maximal-period condition throughout this paper. 

From Section~2.3.5 in \cite{PannetonPhd}, 
the original Mersenne Twister \cite{MT19937} fits the above framework by the \textcolor{red}{$((n_1w-r) \times (n_1w-r))$-}transition matrix
\begin{eqnarray} \label{eqn:MT transition matrix}
\mathbf{A} = \left(
        \begin{array}{ccccccc}
         {} & {} & {} & \mathbf{I}_{w} & {} & {} & \mathbf{S}\\
         \mathbf{I}_{w} & {} & {} & {} & {} & {} & {}\\
         {} & \mathbf{I}_{w} & {} & {} & {} & {} & {}\\  
         {} &  {} & \ddots & {} & {} & {} & {}\\
         {} &  {} & {} & {} & \ddots & {} & {}\\
         {} &  {} & {} & {} & {} & \mathbf{I}_{w - r} & {}\\
         \end{array}
      \right), \ 
\mathbf{S} = \tilde{\mathbf{A}} \left(
      \begin{array}{cc}
      \mathbf{0} & \mathbf{I}_{w-r}\\
      \mathbf{I}_{r} & \mathbf{0}
               \end{array}
      \right),
\end{eqnarray}
where \textcolor{red}{$r$ is an integer with $0 \leq r \leq w-1$},
$\mathbf{I}_w, \mathbf{I}_{r}, \mathbf{I}_{w-r}$ are the identity matrices of size $w, r, w-r$, respectively, 
\textcolor{red}{$\tilde{\mathbf{A}}$ is a suitable $w \times w$ matrix with elements in $\mathbb{F}_2$, 
$\mathbf{S}$ and $\mathbf{I}_{w}$ in the first block of row are located at the $n_1$-th and $(n_1-n_2)$-th blocks of column, respectively, 
and $p = n_1w-r$}. (Note that MT19937 has \textcolor{red}{$(w, n_1, n_2, r) = (32, 624, 397, 31)$ and $p = 19937$, 
which is a Mersenne exponent.}) 
The matrix $\mathbf{B}$ is the representation matrix of the following transformation (so-called the {\it tempering} \cite{MK1994}): 
\begin{eqnarray}
\mathbf{z} & \leftarrow & {\rm trunc}_w (\mathbf{x}_i) \label{eqn:MT output transformation1} \\
\mathbf{y}_i & \leftarrow & \mathbf{Tz} \label{eqn:MT output transformation2}
\end{eqnarray} 
where ``$\leftarrow$'' represents the assignment statement, 
${\rm trunc}_w(\mathbf{x}_i)$ denotes the vector ${}^t(x_{i,0}, \ldots, x_{i, w-1})$, 
which is formed by leading $w$ coordinates of $\mathbf{x}_i$, and 
\textcolor{red}{$\mathbf{T}$ is a suitable $w \times w$ invertible matrix, i.e., 
$\mathbf{B} := (\mathbf{T} , \mathbf{0} , \ldots , \mathbf{0})$.}

\section{Quality criteria} \label{sec:quarity criteria}

Following \cite{LP2009}, we \textcolor{blue}{briefly} recall figures of merit for $\mathbb{F}_2$-linear generators. 
A primary requirement is that $k$-dimensional vectors $(u_i, u_{i+1}, \ldots, u_{i+k-1})$ are uniformly distributed over the unit hypercube $[0, 1)^k$ for large $k$. 
\textcolor{blue}{For this,} we often use the terminology of the {\it dimension of equidistribution with $v$-bit accuracy}. 
Let $\Psi_k$ be the multiset of $k$-dimensional vectors, from all possible initial states $\mathbf{x}_0$:
\begin{eqnarray*}
\Psi_k := \{ (u_0, \ldots, u_{k-1}) \ | \ \mathbf{x}_0 \in \mathbb{F}_2^p \} \subset [0, 1)^k. \label{eqn:multiset}
\end{eqnarray*}
Let us \textcolor{red}{divide} axis $[0,1)$ into $2^v$ pieces.
Then, $[0, 1)^k$ is divided into $2^{kv}$ cubic cells of equal size.
The generator is said to be {\it $k$-dimensionally equidistributed with $v$-bit accuracy} 
if each cell contains exactly the same number of points of $\Psi_k$, i.e., $2^{p-kv}$ points. 
The largest value of $k$ with this property is called 
the {\it dimension of equidistribution with $v$-bit accuracy}, denoted by $k(v)$.
As a criterion of uniformity, larger $k(v)$ for each $1 \leq v \leq w$ is desirable (see \cite{Tootill}). 
We have a trivial upper bound $k(v) \leq \lfloor p/v \rfloor$. 
The gap $d(v) := \lfloor p/v \rfloor -k(v)$ is called the {\it dimension defect} at $v$, and their sum 
$\Delta:=\sum_{v = 1}^{w} (\lfloor p/v \rfloor - k(v))$
is called the {\it total dimension defect}. 
If $\Delta = 0$, the generator is said to be {\it maximally equidistributed}. 
%The value $k(v)$ is adopted as a quality criterion for high-dimensional uniformity.
Note that MT19937 has $\Delta = 6750$. 
%and $k(v)$ does not always attain the upper bound for each $1 \leq v \leq w$.

As a secondary requirement, we may
consider whether the number $N_1$ of nonzero coefficients 
for $P(z)$ is large enough or not 
(see \cite{springerlink:10.1007/BF01029989, MR1155576}). 
%Ideally, we desire that $N_1$ is close to a half degree of $P(z)$ (i.e., $p/2$).
For example, \textcolor{red}{generators for which $P(z)$ is a trinomial or a pentanomial fail statistical tests 
\cite{Lindholm1968, Matsumoto:1996:SDR:232807.232815, MR1958868}, 
so that they should be avoided. 
MT19937 has $N_1 = 135$, \textcolor{blue}{so that} it  has long-lasting  impact from poor initialization, e.g., $\mathbf{x}_0$ that contains only a few bits set to 1
(see \cite{PannetonLM06}). 
WELL generators \cite{PannetonLM06} have $N_1 \approx p/2$, and overcome the above drawback. 
We \textcolor{blue}{note} that such phenomena of initialization will not happen practically if we take care of initialization routines.}

%The above figures of merit \textcolor{blue}{$k(v)$ and $N_1$} are commonly used. 
%To investigate theoretical properties in more detail, \textcolor{blue}{we may consider a new criterion.} 

\section{Lattice structures} \label{sec:lattice structure}
We briefly recall a lattice method for computing 
$k(v)$ in terms of the Couture--L'Ecuyer dual lattices \textcolor{red}{by following \cite{CL2000, LP2009}}. 
\textcolor{blue}{Recently, the faster algorithms \cite{Shin2011141, HMS2010} using the original lattices \cite{CLT93a, Tezuka94} (not the dual lattices) were proposed. 
%As a classical result, we may compute $k(v)$ by using linear algebra \cite{Fushimi}
The aim of this paper is to extract other information from the dual lattices, so that we \textcolor{blue}{revisit} the Couture--L'Ecuyer dual lattices.}

Let $K$ denote the formal power series field $K := \mathbb{F}_2((z^{-1})) = 
\{ \sum_{i = i_0}^{\infty} a_i z^{-i}\ | \ a_i \in {\mathbb{F}}_2, i_0 \in \mathbb{Z} \}$. 
For $a(z) = \sum_{i = i_0}^{\infty} a_i z^{-i} \in K$, 
\textcolor{red}{
we put a standard norm by $|a(z)| := \max \{ -i \in \mathbb{Z} \ | \ a_i \neq 0 \}$ if $a(z) \neq 0$ and 
$|a(z)| :=-\infty$ if $a(z) = 0$.
For a vector $\mathbf{a}(z) = {}^t(a_0(z), a_1(z), \ldots, a_{v-1}(z)) \in K^v$,
we define its {\it norm} (or its {\it length}) by $||\mathbf{a}(z)|| := {\rm max}_{1 \leq l \leq v}|a_{l-1}(z)|$}. 

A subset $L \subset K^v$ is said to be 
an $\mathbb{F}_2[z]$-{\em lattice} if there exists a $K$-linear basis
$\{ \mathbf{v}_1(z), \mathbf{v}_2(z), \ldots, \mathbf{v}_v(z) \}$ of $K^v$ such that
$L$ is their span over $\mathbb{F}_2[t]$, i.e., \textcolor{red}{$L=\langle \mathbf{v}_1(z), \mathbf{v}_2(z), \ldots, \mathbf{v}_v(z)
\rangle_{\mathbb{F}_2[t]}$}.
Such a set of vectors is called a {\it basis} of $L$. 
\textcolor{red}{A reduced basis $\{ \mathbf{v}_1(z), \ldots, \mathbf{v}_v(z) \}$ is defined as follows: 
(i) $\mathbf{v}_1(z)$ is a nonzero shortest vector in $L$, and  (ii) for $l = 2, \ldots, v$, $\mathbf{v}_l(z)$ is 
a shortest vector among the set of vectors $\mathbf{v}(z)$ in $L$ 
such that $\mathbf{v}_1(z), \ldots, \mathbf{v}_{l-1}(z), \mathbf{v}(z)$ are 
linearly independent over $K$. 
It is not unique, but the numbers $\nu_l := ||\mathbf{v}_l(z)||$ ($l = 1, \ldots, v$) are uniquely determined 
by a given lattice $L$, 
and $\nu_1, \ldots, \nu_v$ are called the {\em successive minima} of $L$ (see \cite{Mahler1941}).}

We consider an $\mathbb{F}_2$-linear generator. 
For a given nonzero initial state $\mathbf{x}_0 \in \mathbb{F}_2^p$, 
we define the following formal power series $G_{l-1}(z)$ of $l$th bits of the integer output (i.e., $y_{0, l-1}, y_{1, l-1}, 
y_{2,l-1}, \cdots$): 
\[ G_{l-1}(z) := \sum_{i = 0}^{\infty} y_{i, l-1} z^{-i-1} = y_{0, l-1}z^{-1} + y_{1, l-1}z^{-2} + y_{2, l-1}z^{-3} + 
\cdots \in \mathbb{F}_2((z^{-1})). \]
Note that $G_{l-1}(z)$ has a rational form $G_{l-1} (z) = h_{l-1} (t)/P(z)$, where $h_{l-1}(z) \in \mathbb{F}_2[z]$ and $\deg h_{l-1} (z) < \deg P(z)$. If $P(z)$ is irreducible, 
let $h_0^{-1}(z)$ be a polynomial that is a multiplicative inverse to $h_0(z)$ modulo $P(z)$. 
We set ${\bar h}_{l-1}(z) := h_0^{-1}(z) h_{l-1}(z) \mbox{ mod } P(z)$ ($2 \leq l \leq v$). 
We consider the following vectors: \textcolor{red}{
$\mathbf{w}_1(z)  :=  {}^t(P(z), 0, 0, \ldots, 0),
\mathbf{w}_2(z)  :=  {}^t(-{\bar h}_1(z), 1, 0, \ldots, 0),
\mathbf{w}_3(z)  :=  {}^t(-{\bar h}_2(z), 0, 1, \ldots, 0),
 \ldots, \mathbf{w}_v(z)  :=  {}^t(-{\bar h}_{v-1}(z), 0, 0, \ldots, 1)$},
and construct an $\mathbb{F}_2[z]$-lattice $\mathcal{L}_v^* := \langle \mathbf{w}_1(z), \ldots, \mathbf{w}_v(z) \rangle_{\mathbb{F}_2[z]} \subset \mathbb{F}_2^v[z]$, 
which is said to be the Couture--L'Ecuyer {\it dual lattice} \cite{CL2000}. 

\begin{theorem}[\cite{CL2000}]\label{th:CL2000}
\textcolor{blue}{C}onsider an $\mathbb{F}_2$-linear generator started from a nonzero initial state vector.
Assume that the characteristic polynomial $P(z)$ of $\mathbf{A}$ is primitive. 
Then, $k(v)= \nu_1^{*}$, 
where $\nu_1^{*}$ is the first successive minimum of $\mathcal{L}_v^*$.
\end{theorem}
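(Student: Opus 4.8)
The plan is to reduce the equidistribution statement to a purely linear-algebraic condition, to translate that condition into a divisibility of polynomials via the generating functions $G_{l-1}(z)$, and finally to recognize the latter as membership in $\mathcal{L}_v^*$ with the polynomial degree matching the lattice norm.

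First I would record that, since the generator has maximal period and every map involved is $\mathbb{F}_2$-linear, $k$-dimensional equidistribution with $v$-bit accuracy is equivalent to the surjectivity of the linear map $\mathbf{x}_0 \mapsto (y_{i,l-1})_{0\le i< k,\,1\le l\le v}$ from $\mathbb{F}_2^p$ onto $\mathbb{F}_2^{kv}$: a surjective $\mathbb{F}_2$-linear map has all fibres of the same size $2^{p-kv}$, which is exactly the cell-count defining equidistribution. Thus $k(v)$ is the largest $k$ for which the $kv$ linear functionals $\mathbf{x}_0\mapsto y_{i,l-1}$ ($0\le i<k$, $1\le l\le v$) are $\mathbb{F}_2$-independent, i.e.\ for which there is no nonzero family $\beta_{i,l}\in\mathbb{F}_2$ with $\sum_{i,l}\beta_{i,l}y_{i,l-1}=0$ for every $\mathbf{x}_0$ and with all indices $i$ confined to $\{0,\dots,k-1\}$.

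Next I would encode such a relation by the polynomial filters $a_{l-1}(z):=\sum_{i}\beta_{i,l}z^{i}\in\mathbb{F}_2[z]$, so that the window restriction $i<k$ becomes the degree bound $\deg a_{l-1}\le k-1$, i.e.\ $\|\mathbf{a}(z)\|\le k-1$ in the norm of the excerpt. The coefficient of $z^{-1-s}$ in $\sum_{l}a_{l-1}(z)G_{l-1}(z)$ is the shifted relation value $\sum_{i,l}\beta_{i,l}y_{i+s,l-1}$; because $P(z)$ is primitive, a single nonzero $\mathbf{x}_0$ runs through all nonzero states under $\mathbf{A}$, so the relation holds for every initial state if and only if it holds for every shift $s\ge0$ of one fixed orbit. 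Writing $\sum_{l}a_{l-1}(z)G_{l-1}(z)=\bigl(\sum_{l}a_{l-1}(z)h_{l-1}(z)\bigr)/P(z)$, all those negative-power coefficients vanish exactly when $P(z)\mid\sum_{l}a_{l-1}(z)h_{l-1}(z)$. Hence a valid relation in window $k$ is the same datum as a vector $\mathbf{a}(z)\in\mathbb{F}_2[z]^v$ with $\sum_{l}a_{l-1}h_{l-1}\equiv0\pmod{P}$ and $\|\mathbf{a}(z)\|\le k-1$. Solving the lattice membership $\mathbf{a}(z)=\sum_j c_j(z)\mathbf{w}_j(z)$ coordinatewise forces $c_{l+1}=a_l$ for $l\ge1$ and reduces the remaining equation to $a_0+\sum_{l\ge1}a_l\bar h_l\equiv0\pmod P$, which---after multiplying through by $h_0(z)$, invertible modulo the irreducible $P$ for a suitable nonzero $\mathbf{x}_0$ with $h_0\neq0$---is precisely $\sum_l a_l h_l\equiv0\pmod P$. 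Thus $\mathcal{L}_v^*$ is exactly the module of relations, and $\|\mathbf{a}(z)\|$ equals the window size minus $1$; a shortest nonzero lattice vector has norm $\nu_1^*$, so the smallest window admitting a relation is $\nu_1^*+1$, equidistribution holds for every $k\le\nu_1^*$ and fails at $k=\nu_1^*+1$, giving $k(v)=\nu_1^*$.

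The step I expect to be the main obstacle is making the dictionary of the last two steps exact rather than merely morally correct: one must verify that the triangular basis with the $h_0^{-1}$-twist generates the full solution module and not a proper sublattice, that $h_0(z)$ is genuinely invertible modulo $P(z)$---which is where primitivity and the choice of a nonzero $\mathbf{x}_0$ with $h_0\neq0$ are really used---and that the degree of each polynomial entry coincides with the norm $|\cdot|$ as defined in the excerpt, so that the off-by-one between window size and norm is accounted for correctly. Everything else is bookkeeping once this correspondence is nailed down.
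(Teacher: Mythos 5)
Your argument is correct and follows essentially the same route as the paper: although the paper states Theorem~\ref{th:CL2000} as a citation of \cite{CL2000} without reproving it, your central dictionary between $\mathbb{F}_2$-linear relations on the leading $v$ bits and points of $\mathcal{L}_v^*$ --- passing through $\sum_l a_{l-1}(z)G_{l-1}(z)$, the vanishing of negative-power coefficients, the congruence $\sum_l a_{l-1}(z)h_{l-1}(z)\equiv 0 \pmod{P(z)}$, and the triangular basis $\mathbf{w}_1(z),\ldots,\mathbf{w}_v(z)$ --- is exactly the content and proof of the paper's Proposition~\ref{proposition1}. The surrounding steps (equidistribution as surjectivity of the $\mathbb{F}_2$-linear map onto $\mathbb{F}_2^{kv}$, primitivity used both to replace ``all initial states'' by ``all shifts of one orbit'' and to invert $h_0(z)$ modulo the irreducible $P(z)$, and the off-by-one between window size $k$ and the norm bound $k-1$) are all handled correctly.
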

We can obtain a reduced basis by using some polynomial-time lattice basis reduction algorithms 
(e.g., \cite{MR736459, Lenstra1985,MS2003,Schmidt1991181}).

\section{A new figure of merit for $\mathbb{F}_2$-linear generators} \label{sec:criterion}

Usually, when we assess an $\mathbb{F}_2$-linear generator, 
we only see the length of a shortest vector (i.e., $k(v)$) as the first filter, 
and abandon the other information. In this section, 
we focus on the polynomial elements of vectors in $\mathcal{L}_v^*$,  
and develop a new figure of merit $N_v$ as 
a quality criterion in dimensions that are higher than $k(v)$ and 
as a multi-dimensional generalization of $N_1$. 

First, we arrange the relationship between  
$\mathbb{F}_2$-linear relations appeared on the most significant $v$ bits
and points in $\mathcal{L}_v^*$. 
%The following proposition is obtained by using a similar technique of Lemma~4.40 in \cite{Niederreiter:book}, 
%but the original one was not written in terms of lattice.
\begin{proposition} \label{proposition1}
There exists an $\mathbb{F}_2$-linear relation
\begin{equation} \label{eqn:linear_relation}
\sum_{l = 1}^{v} \sum_{j = 0}^{k-1} w_{j, l-1} y_{{i + j}, l-1} = 0 \mbox{ for all } i \geq 0,
\end{equation}
if and only if ${}^t(w_0(z)), \ldots, w_{v-1}(z)) \in \mathcal{L}_v^*$, 
where $w_{l-1}(z) := \sum_{j = 0}^{k-1} w_{j, l-1} z^j \in \mathbb{F}_2[z]$.
\end{proposition}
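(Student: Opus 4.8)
The plan is to pass to generating functions and to show that \emph{both} sides of the claimed equivalence are encoded by a single divisibility condition modulo $P(z)$. First I would introduce the combined formal power series
\[ F(z) := \sum_{l=1}^{v} w_{l-1}(z)\,G_{l-1}(z) \in \mathbb{F}_2((z^{-1})), \]
and compute the coefficient of $z^{-i-1}$ in $F(z)$ by convolving the polynomial $w_{l-1}(z) = \sum_{j=0}^{k-1} w_{j,l-1} z^{j}$ against the series $G_{l-1}(z) = \sum_{i \geq 0} y_{i,l-1} z^{-i-1}$. Matching the exponent $z^{-i-1}$ forces the inner index to $i+j$, so the coefficient is exactly $\sum_{l=1}^{v}\sum_{j=0}^{k-1} w_{j,l-1}\, y_{i+j,l-1}$, i.e.\ the left-hand side of (\ref{eqn:linear_relation}). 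Since every strictly negative power of $z$ is of the form $z^{-i-1}$ for a unique $i \geq 0$, the relation (\ref{eqn:linear_relation}) holds for all $i \geq 0$ if and only if all negative-power coefficients of $F(z)$ vanish, that is, if and only if $F(z) \in \mathbb{F}_2[z]$.

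Next I would substitute the rational forms $G_{l-1}(z) = h_{l-1}(z)/P(z)$, which gives $F(z) = \big(\sum_{l=1}^{v} w_{l-1}(z) h_{l-1}(z)\big)/P(z)$. As the numerator is a polynomial, $F(z) \in \mathbb{F}_2[z]$ holds precisely when $P(z)$ divides it, i.e.
\[ \sum_{l=1}^{v} w_{l-1}(z)\, h_{l-1}(z) \equiv 0 \pmod{P(z)}; \]
call this congruence $(\ast)$. This disposes of the ``relation'' side of the statement.

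For the lattice side I would write a general element of $\mathcal{L}_v^{*}$ as $c_1(z)\mathbf{w}_1(z) + \sum_{l=2}^{v} c_l(z)\mathbf{w}_l(z)$ with $c_l(z) \in \mathbb{F}_2[z]$ and read off its coordinates: the $l$-th coordinate ($l \geq 2$) equals $c_l(z)$, while the first coordinate equals $c_1(z)P(z) - \sum_{l=2}^{v} c_l(z)\bar h_{l-1}(z)$. Matching with ${}^t(w_0(z), \ldots, w_{v-1}(z))$ forces $c_l = w_{l-1}$ for $l \geq 2$, so membership amounts to the existence of $c_1(z)$ with $w_0(z) = c_1(z)P(z) - \sum_{l=2}^{v} w_{l-1}(z)\bar h_{l-1}(z)$; over $\mathbb{F}_2$ (where the sign is irrelevant) this is $w_0 + \sum_{l=2}^{v} w_{l-1}\bar h_{l-1} \equiv 0 \pmod{P}$. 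Multiplying by the unit $h_0(z)$ modulo $P(z)$ and inserting $\bar h_{l-1} \equiv h_0^{-1} h_{l-1}$ collapses this to exactly $(\ast)$; because $P$ is irreducible and $0 \neq h_0$ has degree $< \deg P$, the polynomial $h_0$ is invertible modulo $P$, so the step is reversible. Chaining the two equivalences then proves the proposition.

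The main obstacle I anticipate is bookkeeping rather than depth: aligning the convolution so that ``for all $i \geq 0$'' matches precisely the negative-power part of $F(z)$, without miscounting the boundary terms that feed the polynomial part of each product, and then correctly bridging the asymmetry between the $\bar h_{l-1}$ that appear in the lattice basis and the raw $h_{l-1}$ appearing in $(\ast)$ via the unit $h_0$. I would also remark in passing that although $(\ast)$ is written with the initial-state-dependent $h_{l-1}$, a shift of the initial state multiplies every $h_{l-1}$ by $z$ modulo $P$, so $(\ast)$ is invariant; this is exactly what makes $\mathcal{L}_v^{*}$, and hence the relation, well defined independently of $\mathbf{x}_0$.
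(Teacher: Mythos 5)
Your proposal is correct and follows essentially the same route as the paper's own proof: pass to the generating-function combination $\sum_l w_{l-1}(z)G_{l-1}(z)$, identify the relation with the vanishing of its negative-power part, reduce to the congruence $\sum_l w_{l-1}(z)h_{l-1}(z)\equiv 0 \pmod{P(z)}$, and match this with membership in $\mathcal{L}_v^*$ via the unit $h_0(z)$. Your coordinate-matching of a general lattice element is a slightly more explicit rendering of the paper's parametrization of the polynomial solutions, but the argument is the same.
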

\begin{proof}
We consider a linear combination
\begin{eqnarray} \label{eqn:linear combination}
G_0(z)w_0(z) + \cdots + G_{l-1}(z)w_{l-1}(z).
\end{eqnarray} 
For $i \geq 0$, the coefficient of $z^{-i-1}$ in (\ref{eqn:linear combination})
is $\sum_{l = 1}^{v} \sum_{j = 0}^{k-1} w_{j, l-1} y_{{i + j}, l-1}$, 
so that the coefficients of the negative power are all zero if and only if (\ref{eqn:linear_relation}) holds. 
Then, (\ref{eqn:linear combination}) is a polynomial.
On the other hand, (\ref{eqn:linear combination}) is also described as 
$( h_0(z)w_0(z) + \cdots + h_{v-1}(z)w_{v-1}(z))/P(z)$. 
Thus, (\ref{eqn:linear_relation}) is equivalent to
\begin{eqnarray} \label{eqn:Diophantine}
h_0(z) w_0(z) + \cdots + h_{v-1}(z) w_{v-1}(z) \equiv 0 \mbox{ mod } P(z).
\end{eqnarray}

Here, we assume (\ref{eqn:linear_relation}). 
By multiplying (\ref{eqn:Diophantine}) by $h_0^{-1}(z)$, 
we have $w_0(z) \equiv -  {\bar h_1}(z) w_1(z)- \cdots -  {\bar h_{v-1}}(z) w_{v-1}(z)\mbox{ mod } P(z)$.
In (\ref{eqn:Diophantine}), 
each of polynomial solutions ${}^t(w_0(z), \ldots, w_{v-1}(z))$ is written as $-a(z)\mathbf{w}_1(z) + h_1(z)\mathbf{w}_2(z) + \cdots + h_{v-1}(z) \mathbf{w}_v(z) = {}^t(-a(z)P(z)-{\bar h_1}(z)w_1(z) - \cdots - {\bar h_{v-1}}(z)w_{v-1}(z), 
w_1(z), \ldots, w_{v-1}(z))$ for a suitable $a(z) \in \mathbb{F}_2[z]$. 
Hence, ${}^t(w_0(z), \ldots, w_{v-1}(z)) \in \mathcal{L}_v^*$. 

Conversely, it is easy to see that 
all of $\mathbb{F}_2[z]$-linear combinations of $\mathbf{w}_1(z), \ldots, \mathbf{w}_v(z)$ satisfy 
(\ref{eqn:Diophantine}), 
because $\mathbf{w}_1(z), \ldots, \mathbf{w}_{v-1}(z)$ are solutions in (\ref{eqn:Diophantine}), respectively.
Thus, the proposition follows.
\end{proof}

Using the above proposition, from vectors of $\mathcal{L}_v^*$, 
we obtain information on $\mathbb{F}_2$-linear relations
in dimensions that are higher than $k(v)$. 
In particular, a nonzero shortest vector of 
$\mathcal{L}_v^*$ corresponds to  
a non-trivial $\mathbb{F}_2$-linear relation 
\begin{eqnarray} \label{eqn:minimal}
\sum_{l = 1}^{v} \sum_{j = 0}^{k(v)} w_{j,l-1} y_{i + j, l-1} = 0 \mbox{ for all } i \geq 0, 
\end{eqnarray}
whose degree is minimal for the most significant $v$ bits given. 
We call (\ref{eqn:minimal}) a {\it minimal $\mathbb{F}_2$-linear relation with $v$-bit accuracy}. 
In general, such a minimal $\mathbb{F}_2$-linear relation is not unique, 
because a shortest vector is not unique. 
Furthermore, we have no non-trivial $\mathbb{F}_2$-linear relation 
$\sum_{l = 1}^{v} \sum_{j = 0}^{k} w_{j,l-1} y_{i + j, l-1} = 0$ for $k < k(v)$. 
All of the minimal $\mathbb{F}_2$-linear relations with $v$-bit accuracy are included 
in all the vectors whose dimensions are higher than $k(v)$, 
so that the $(k(v)+1)$-dimensional case appears to be the most important. 

Here, to assess the $\mathbb{F}_2$-linear generators, 
let us consider whether or not the minimal $\mathbb{F}_2$-linear relations have simple regularity. 
The simplest way of checking this is to enumerate the number of nonzero coefficients $w_{j, l-1}$ in (\ref{eqn:minimal}). We call this the {\it weight}. 
When there exist low-weight $\mathbb{F}_2$-linear relations, 
the generator may have risks in some situations (see Remark~\ref{remark:weight discrepancy test}). 
Therefore, we define the {\it minimum weight} $N_v$ by the lowest weight 
for all the minimal $\mathbb{F}_2$-linear relations with $v$-bit accuracy in (\ref{eqn:minimal}), 
and propose $N_v$ as a new figure of merit for $\mathbb{F}_2$-linear generators.
When $v = 1$, the minimal $\mathbb{F}_2$-linear relation (\ref{eqn:minimal}) 
coincides with a characteristic polynomial $P(z)$, so that 
$N_v$ equals the number $N_1$ of nonzero coefficients of $P(z)$. 
Hence, we can interpret $N_v$ as a multi-dimensional generalization of $N_1$.  

For practical use, we give an algorithm for computing $N_v$ as follows.  
Let $\{ \tilde{{\bf w}}_1(z), \ldots, \tilde{{\bf w}}_v(z) \}$ be a reduced basis of $\mathcal{L}_v^*$.
From the uniqueness of the successive minima, 
we have an integer $v' \in \{ 1, \ldots, v \}$ such that   
$|| \tilde{{\bf w}}_1(z) || = \cdots = || \tilde{{\bf w}}_{v'}(z) || < || \tilde{{\bf w}}_{v'+1}(z) || \leq \cdots \leq || \tilde{{\bf w}}_v(z) ||$. 
Then, all the shortest vectors are described by
\begin{eqnarray} \label{eqn:Gray}
 \{ c_1 \tilde{{\bf w}}_1(z) + \ldots + c_{v'} \tilde{{\bf w}}_{v'}(z) \ | \ {}^t(c_1, \ldots, c_{v'}) \in \mathbb{F}_2^{v'} \setminus {}^t(0, \ldots, 0) \},
\end{eqnarray}
and they correspond to all the minimal $\mathbb{F}_2$-linear relations with $v$-bit accuracy. 
The number of the shortest vectors is $2^{v'} - 1$. 
Here, if we give coefficients $c_1, \ldots, c_{v'}$ by $v'$-bit Gray code order, 
it is possible to obtain another shortest vector by executing the addition only once. 
In addition, if $v'$ is small and $p$ is not too large (e.g., $v' \leq 32$ and $p \leq 19937$), 
we can compute the minimum weight $N_v$ within a practical time period. 

\begin{remark} \label{remark:weight discrepancy test}
\textcolor{red}{As theoretical justification,} we mention a strong relationship between our figure of merit $N_v$ 
and the {\it weight discrepancy test} proposed by Matsumoto and Nishimura \cite{MR1958868}.  
For simplicity, consider $k$ successive output values with $v$-bit accuracy, 
where $k > k(v)$, and 
let $\Phi$ be the map from the state vectors to $m := v \times k$ bits in the outputs: 
\begin{eqnarray} \label{eqn:code}
 \Phi: \mathbb{F}_2^{p} \to \mathbb{F}_2^{m}, \quad \mathbf{x}_0 \mapsto ({\rm trunc}_v(\mathbf{y}_0), 
{\rm trunc}_v(\mathbf{y}_{1}), \ldots, {\rm trunc}_v(\mathbf{y}_{k-1})).
\end{eqnarray}
The map $\Phi$ is $\mathbb{F}_2$-linear, so that the image $C \subset \mathbb{F}_2^m$ is a linear subspace. 
In coding theory, $C$ is said to be a {\it linear code}. 
The {\it dual code} $C^{\perp}$ of $C$ is defined by
\[ C^{\perp} := \{ \mathbf{c}' \in \mathbb{F}_2^m \ | \ \langle \mathbf{c}', \mathbf{c} \rangle = 0 \mbox{ for all } \mathbf{c} \in C \}, \]
where $\langle \mathbf{c}', \mathbf{c} \rangle = \sum_{i = 1}^m c_i' c_i$ is an inner product
for $\mathbf{c}' = {}^t (c_1', \ldots, c_m') \in \mathbb{F}_2^m$ and $\mathbf{c} = {}^t (c_1, \ldots, c_m) \in \mathbb{F}_2^m$. 
Note that $C^{\perp}$ contains the set of $\mathbb{F}_2$-linear relations on $m = k \times v$ bits (see \cite{MR1958868}). 

The weight discrepancy test is a theoretical test for estimating 
the deviation of the number of $1$'s on $m = v \times \textcolor{blue}{k}$ bits in (\ref{eqn:code}) from the binomial distribution. 
Matsumoto and Nishimura \cite{MR1958868} gave a formula for computing a risky sample size 
from a weight enumerator polynomial of $C$, 
which is computed via a weight enumerator polynomial of $C^{\perp}$ with 
$k$ being slightly greater than $k(v)$,
and via inversion by the MacWilliams identity. 
Their paper implies that if the minimum weight of vectors of $C^{\perp}$ is more than $15$ or $20$, 
a given generator is safe, but if the weights are too low, e.g., $\leq 6$, \textcolor{red}{(and especially if such $\mathbb{F}_2$-linear relations 
concentrate on a low-dimensional projection)}, 
there \textcolor{red}{is} a possibility of detecting deviations. 
In fact, we can identify $C^{\perp}$ with a set of all the vectors whose lengths are shorter than $k$, 
so that $N_v$ coincides with the minimum weight of vectors in $C^{\perp}$ in the case where $k = k(v) + 1$. 
Thus, when $N_v$ is small, a given generator may have some risks. 

A drawback of $N_v$ is that 
we have to execute exhaustive searches \textcolor{blue}{in} (\ref{eqn:Gray}) 
because the weight enumeration 
(or finding the minimum weight $N_v$) is NP-hard \cite{MR1481035}. 
However, from the viewpoint of speed and memory efficiency, 
the use of the Couture--L'Ecuyer dual lattice method appears to be much superior to 
the use of the Gaussian elimination on a $p \times m$ matrix in  \cite{MR1958868} 
when we construct a basis of $C^{\perp}$ (as an $\mathbb{F}_2$-linear vector space) for a large $p$. 
\end{remark}

\section{$\mathbb{F}_2$-linear relations of Mersenne Twister MT19937} \label{sec:linear relation for MT}
In this section, we numerically analyze $32$-bit 
Mersenne Twister MT19937 (i.e., $w = 32$) in terms of the method in Section~\ref{sec:criterion}. 
Tables~\ref{table:successive_minima1} and ~\ref{table:successive_minima2} list the successive minima $\nu_1^{*}, \nu_2^{*}, \ldots, \nu_v^{*}$ of $\mathcal{L}_v^{*}$,
 the dimension defect $d(v)$ at $v$, 
and our new figure of merit $N_v$ for each $1 \leq v \leq 32$. 
From Theorem~\ref{th:CL2000}, note that $\nu_1^{*} = k(v)$. 
As a result, $N_v$'s for lower bits are small. 

\begin{table}[h]
\caption{The successive minima, $d(v)$, and $N_v$ of MT19937.}
{\tiny
\begin{tabular}{|r||r|r|r|r|r|r|r|r|r|r|r|r|r|r|r|r|} \hline 
{} & $\mathcal{L}_{1}^{*}$ & $\mathcal{L}_{2}^{*}$ & $\mathcal{L}_{3}^{*}$ & $\mathcal{L}_{4}^{*}$ & $\mathcal{L}_{5}^{*}$ & $\mathcal{L}_{6}^{*}$ & $\mathcal{L}_{7}^{*}$ & $\mathcal{L}_{8}^{*}$ & $\mathcal{L}_{9}^{*}$ & $\mathcal{L}_{10}^{*}$ & $\mathcal{L}_{11}^{*}$ & $\mathcal{L}_{12}^{*}$ & $\mathcal{L}_{13}^{*}$ & $\mathcal{L}_{14}^{*}$ & $\mathcal{L}_{15}^{*}$ & $\mathcal{L}_{16}^{*}$  \\ \hline 
$\nu_{1}^{*}$ & $19937$ & $9968$ & $6240$ & $4984$ & $3738$ & $3115$ & $2493$ & $2492$ & $1869$ & $1869$ & $1248$ & $1246$ & $1246$ & $1246$ & $1246$ & $1246$ \\ 
$\nu_{2}^{*}$ & {} & $9969$ & $6848$ & $4984$ & $3738$ & $3115$ & $2493$ & $2492$ & $1869$ & $1869$ & $1868$ & $1246$ & $1246$ & $1246$ & $1246$ & $1246$ \\ 
$\nu_{3}^{*}$ & {} & {} & $6849$ & $4984$ & $3738$ & $3115$ & $2493$ & $2492$ & $1870$ & $1869$ & $1869$ & $1247$ & $1246$ & $1246$ & $1246$ & $1246$ \\ 
$\nu_{4}^{*}$ & {} & {} & {} & $4985$ & $3739$ & $3116$ & $3114$ & $2492$ & $1870$ & $1869$ & $1869$ & $1247$ & $1246$ & $1246$ & $1246$ & $1246$ \\ 
$\nu_{5}^{*}$ & {} & {} & {} & {} & $4984$ & $3738$ & $3114$ & $2492$ & $2491$ & $1869$ & $1869$ & $1868$ & $1246$ & $1246$ & $1246$ & $1246$ \\ 
$\nu_{6}^{*}$ & {} & {} & {} & {} & {} & $3738$ & $3115$ & $2492$ & $2492$ & $1869$ & $1869$ & $1869$ & $1247$ & $1246$ & $1246$ & $1246$ \\ 
$\nu_{7}^{*}$ & {} & {} & {} & {} & {} & {} & $3115$ & $2492$ & $2492$ & $1870$ & $1869$ & $1869$ & $1247$ & $1246$ & $1246$ & $1246$ \\ 
$\nu_{8}^{*}$ & {} & {} & {} & {} & {} & {} & {} & $2493$ & $2492$ & $1870$ & $1869$ & $1869$ & $1868$ & $1246$ & $1246$ & $1246$ \\ 
$\nu_{9}^{*}$ & {} & {} & {} & {} & {} & {} & {} & {} & $2492$ & $2491$ & $1869$ & $1869$ & $1869$ & $1247$ & $1246$ & $1246$ \\ 
$\nu_{10}^{*}$ & {} & {} & {} & {} & {} & {} & {} & {} & {} & $2492$ & $1869$ & $1869$ & $1869$ & $1247$ & $1246$ & $1246$ \\ 
$\nu_{11}^{*}$ & {} & {} & {} & {} & {} & {} & {} & {} & {} & {} & $1869$ & $1869$ & $1869$ & $1868$ & $1246$ & $1246$ \\ 
$\nu_{12}^{*}$ & {} & {} & {} & {} & {} & {} & {} & {} & {} & {} & {} & $1869$ & $1869$ & $1869$ & $1247$ & $1246$ \\ 
$\nu_{13}^{*}$ & {} & {} & {} & {} & {} & {} & {} & {} & {} & {} & {} & {} & $1869$ & $1869$ & $1247$ & $1246$ \\ 
$\nu_{14}^{*}$ & {} & {} & {} & {} & {} & {} & {} & {} & {} & {} & {} & {} & {} & $1869$ & $1868$ & $1246$ \\ 
$\nu_{15}^{*}$ & {} &{} & {} & {} & {} & {} & {} & {} & {} & {} & {} & {} & {} & {} & $1869$ & $1246$ \\ 
$\nu_{16}^{*}$ & {} & {} & {} & {} & {} & {} & {} & {} & {} & {} & {} & {} & {} & {} & {} & $1247$ \\ \hline
$d(v)$ &  $0$ & $0$ & $405$ & $0$ & $249$ & $207$ & $355$ & $0$ & $346$ & $124$ & $564$ & $415$ & $287$ & $178$ & $83$ & $0$ \\ \hline
$N_v$ &  $135$ & $10020$ & $393$ & $128$ & $44$ & $57$ & $38$ & $15$ & $10$ & $10$ & $40$ & $5$ & $5$ & $5$ & $5$ & $5$ \\ \hline
\end{tabular}
}
\label{table:successive_minima1}
\end{table}

\begin{table}[h]
\caption{The successive minima, $d(v)$, and $N_v$ of MT19937 (continued).}
{\tiny
\begin{tabular}{|r||r|r|r|r|r|r|r|r|r|r|r|r|r|r|r|r|} \hline 
{} & $\mathcal{L}_{17}^{*}$ & $\mathcal{L}_{18}^{*}$ & $\mathcal{L}_{19}^{*}$ & $\mathcal{L}_{20}^{*}$ & $\mathcal{L}_{21}^{*}$ & $\mathcal{L}_{22}^{*}$ & $\mathcal{L}_{23}^{*}$ & $\mathcal{L}_{24}^{*}$ & $\mathcal{L}_{25}^{*}$ & $\mathcal{L}_{26}^{*}$ & $\mathcal{L}_{27}^{*}$ & $\mathcal{L}_{28}^{*}$ & $\mathcal{L}_{29}^{*}$ & $\mathcal{L}_{30}^{*}$ & $\mathcal{L}_{31}^{*}$ & $\mathcal{L}_{32}^{*}$  \\ \hline 
$\nu_{1}^{*}$ & $623$ & $623$ & $623$ & $623$ & $623$ & $623$ & $623$ & $623$ & $623$ & $623$ & $623$ & $623$ & $623$ & $623$ & $623$ & $623$ \\ 
$\nu_{2}^{*}$ & $623$ & $623$ & $623$ & $623$ & $623$ & $623$ & $623$ & $623$ & $623$ & $623$ & $623$ & $623$ & $623$ & $623$ & $623$ & $623$ \\ 
$\nu_{3}^{*}$ & $1246$ & $623$ & $623$ & $623$ & $623$ & $623$ & $623$ & $623$ & $623$ & $623$ & $623$ & $623$ & $623$ & $623$ & $623$ & $623$ \\ 
$\nu_{4}^{*}$ & $1246$ & $623$ & $623$ & $623$ & $623$ & $623$ & $623$ & $623$ & $623$ & $623$ & $623$ & $623$ & $623$ & $623$ & $623$ & $623$ \\ 
$\nu_{5}^{*}$ & $1246$ & $1246$ & $623$ & $623$ & $623$ & $623$ & $623$ & $623$ & $623$ & $623$ & $623$ & $623$ & $623$ & $623$ & $623$ & $623$ \\ 
$\nu_{6}^{*}$ & $1246$ & $1246$ & $624$ & $623$ & $623$ & $623$ & $623$ & $623$ & $623$ & $623$ & $623$ & $623$ & $623$ & $623$ & $623$ & $623$ \\ 
$\nu_{7}^{*}$ & $1246$ & $1246$ & $1246$ & $623$ & $623$ & $623$ & $623$ & $623$ & $623$ & $623$ & $623$ & $623$ & $623$ & $623$ & $623$ & $623$ \\ 
$\nu_{8}^{*}$ & $1246$ & $1246$ & $1246$ & $624$ & $623$ & $623$ & $623$ & $623$ & $623$ & $623$ & $623$ & $623$ & $623$ & $623$ & $623$ & $623$ \\ 
$\nu_{9}^{*}$ & $1246$ & $1246$ & $1246$ & $1246$ & $623$ & $623$ & $623$ & $623$ & $623$ & $623$ & $623$ & $623$ & $623$ & $623$ & $623$ & $623$ \\ 
$\nu_{10}^{*}$ & $1246$ & $1246$ & $1246$ & $1246$ & $624$ & $623$ & $623$ & $623$ & $623$ & $623$ & $623$ & $623$ & $623$ & $623$ & $623$ & $623$ \\ 
$\nu_{11}^{*}$ & $1246$ & $1246$ & $1246$ & $1246$ & $1246$ & $623$ & $623$ & $623$ & $623$ & $623$ & $623$ & $623$ & $623$ & $623$ & $623$ & $623$ \\ 
$\nu_{12}^{*}$ & $1246$ & $1246$ & $1246$ & $1246$ & $1246$ & $624$ & $623$ & $623$ & $623$ & $623$ & $623$ & $623$ & $623$ & $623$ & $623$ & $623$ \\ 
$\nu_{13}^{*}$ & $1246$ & $1246$ & $1246$ & $1246$ & $1246$ & $1246$ & $623$ & $623$ & $623$ & $623$ & $623$ & $623$ & $623$ & $623$ & $623$ & $623$ \\ 
$\nu_{14}^{*}$ & $1246$ & $1246$ & $1246$ & $1246$ & $1246$ & $1246$ & $624$ & $623$ & $623$ & $623$ & $623$ & $623$ & $623$ & $623$ & $623$ & $623$ \\ 
$\nu_{15}^{*}$ & $1246$ & $1246$ & $1246$ & $1246$ & $1246$ & $1246$ & $1246$ & $623$ & $623$ & $623$ & $623$ & $623$ & $623$ & $623$ & $623$ & $623$ \\ 
$\nu_{16}^{*}$ & $1246$ & $1246$ & $1246$ & $1246$ & $1246$ & $1246$ & $1246$ & $624$ & $623$ & $623$ & $623$ & $623$ & $623$ & $623$ & $623$ & $623$ \\ 
$\nu_{17}^{*}$ & $1247$ & $1246$ & $1246$ & $1246$ & $1246$ & $1246$ & $1246$ & $1246$ & $623$ & $623$ & $623$ & $623$ & $623$ & $623$ & $623$ & $623$ \\ 
$\nu_{18}^{*}$ & {} & $1247$ & $1246$ & $1246$ & $1246$ & $1246$ & $1246$ & $1246$ & $624$ & $623$ & $623$ & $623$ & $623$ & $623$ & $623$ & $623$ \\ 
$\nu_{19}^{*}$ & {} & {} & $1246$ & $1246$ & $1246$ & $1246$ & $1246$ & $1246$ & $1246$ & $623$ & $623$ & $623$ & $623$ & $623$ & $623$ & $623$ \\ 
$\nu_{20}^{*}$ & {} & {} & {} & $1246$ & $1246$ & $1246$ & $1246$ & $1246$ & $1246$ & $624$ & $623$ & $623$ & $623$ & $623$ & $623$ & $623$ \\ 
$\nu_{21}^{*}$ & {} & {} & {} & {} & $1246$ & $1246$ & $1246$ & $1246$ & $1246$ & $1246$ & $623$ & $623$ & $623$ & $623$ & $623$ & $623$ \\ 
$\nu_{22}^{*}$ & {} & {} & {} & {} & {} & $1246$ & $1246$ & $1246$ & $1246$ & $1246$ & $624$ & $623$ & $623$ & $623$ & $623$ & $623$ \\ 
$\nu_{23}^{*}$ & {} & {} & {} & {} & {} & {} & $1246$ & $1246$ & $1246$ & $1246$ & $1246$ & $623$ & $623$ & $623$ & $623$ & $623$ \\ 
$\nu_{24}^{*}$ & {} & {} & {} & {} & {} & {} & {} & $1246$ & $1246$ & $1246$ & $1246$ & $624$ & $623$ & $623$ & $623$ & $623$ \\ 
$\nu_{25}^{*}$ & {} & {} & {} & {} & {} & {} & {} & {} & $1246$ & $1246$ & $1246$ & $1246$ & $623$ & $623$ & $623$ & $623$ \\ 
$\nu_{26}^{*}$ & {} & {} & {} & {} & {} & {} & {} & {} & {} & $1246$ & $1246$ & $1246$ & $624$ & $623$ & $623$ & $623$ \\ 
$\nu_{27}^{*}$ & {} & {} & {} & {} & {} & {} & {} & {} & {} & {} & $1246$ & $1246$ & $1246$ & $623$ & $623$ & $623$ \\ 
$\nu_{28}^{*}$ & {} & {} & {} & {} & {} & {} & {} & {} & {} & {} & {} & $1246$ & $1246$ & $624$ & $623$ & $623$ \\ 
$\nu_{29}^{*}$ & {} & {} & {} & {} & {} & {} & {} & {} & {} & {} & {} & {} & $1246$ & $1246$ & $623$ & $623$ \\ 
$\nu_{30}^{*}$ & {} & {} & {} & {} & {} & {} & {} & {} & {} & {} & {} & {} & {} & $1246$ & $624$ & $623$ \\ 
$\nu_{31}^{*}$ & {} & {} & {} & {} & {} & {} & {} & {} & {} & {} & {} & {} & {} & {} & $1246$ & $623$ \\ 
$\nu_{32}^{*}$ & {} & {} & {} & {} & {} & {} & {} & {} & {} & {} & {} & {} & {} & {} & {} & $624$ \\ \hline
$d(v)$ & $549$ & $484$ & $426$ & $373$ & $326$ & $283$ & $243$ & $207$ & $174$ & $143$ & $115$ & $89$ & $64$ & $41$ & $20$ & $0$ \\ \hline
$N_v$ &  $7$ & $6$ & $6$ & $6$ & $6$ & $6$ & $6$ & $6$ & $6$ & $6$ & $6$ & $6$ & $6$ & $6$ & $5$ & $5$ \\ \hline
\end{tabular}
}
\label{table:successive_minima2}
\end{table}

To conduct statistical tests in the next section, 
we introduce the minimal $\mathbb{F}_2$-linear relations with 21-bit and 12-bit accuracy, for example. 
First, we analyze the minimal $\mathbb{F}_2$-linear relations with $21$-bit accuracy. 
By checking all the nonzero shortest vectors in $\mathcal{L}_{21}^{*}$,
we obtain the following low-weight $\mathbb{F}_2$-linear relations:
the six-term linear relation 
\begin{eqnarray*}
y_{i, 1} + y_{i, 16} + y_{i + 396, 2} + y_{i + 396, 17} + y_{i + 623, 2}  + y_{i +623, 17}  =  0,
\end{eqnarray*}
the seven-term linear relations
\begin{eqnarray*}
& & y_{i, 7} + y_{i, 14} + y_{i, 15} + y_{i + 396, 8} + + y_{i + 396, 16} + y_{i + 623, 8} + y_{i + 623, 16} =  0,\\  
& & y_{i, 3} + y_{i + 396, 1} + +  y_{i + 396, 4} + y_{i + 396, 19} + y_{i + 623, 1} + y_{i + 623, 4}  + y_{i + 623, 19} =  0,\\
& & y_{i, 2} + y_{i, 9} + y_{i, 10} + y_{i, 17} + y_{i, 20} + y_{i + 396, 11} + y_{i + 623, 11} = 0,
\end{eqnarray*}
and so on. In particular, all of the minimal $\mathbb{F}_2$-linear relations 
concentrate on the three non-successive output values 
$\{ \mathbf{y}_{i}, \mathbf{y}_{i+396}, \mathbf{y}_{i+623} \}$. 

In addition to the above, we analyze the minimal $\mathbb{F}_2$-linear relations with $12$-bit accuracy.
In this case, we have three minimal $\mathbb{F}_2$-linear relations, i.e., the five-term linear relation
\begin{eqnarray} \label{eqn:5-term}
y_{i, 2} + y_{i + 792, 4} + y_{i + 792, 11} + y_{i + 1246, 4} + y_{i + 1246, 11}  = 0, 
\end{eqnarray}
and the 18-term and the 19-term linear relations. 
They appear only on the five non-successive output values $\{ \mathbf{y}_{i}, \mathbf{y}_{i + 396}, \mathbf{y}_{i + 623}, \mathbf{y}_{i + 792}, \mathbf{y}_{i + 1246} \}$.

Consequently, we aim to determine whether there are any observable
deviations for such non-successive output values. 

\begin{remark} \label{remark:tempering}
Niederreiter \cite{MR1236747, MR1334623} proposed the {\it multiple-recursive matrix method} 
as a general class of pseudorandom number generators. In this framework,
we can describe Mersenne Twisters in (\ref{eqn:MT transition matrix})--(\ref{eqn:MT output transformation2}) 
by the following matrix linear recurrence: 
\begin{eqnarray*} \label{eqn:MRMM}
\mathbf{y}_{i} = \mathbf{y}_{i + n_2 - n_1} + \mathbf{T} \tilde{\mathbf{A}}
\left(
\begin{array}{cc}
\mathbf{0} & \mathbf{0} \\
\mathbf{0} & \mathbf{I}_{r}\\
\end{array} \right) \mathbf{T}^{-1} \mathbf{y}_{i + 1 - n_1} 
+ \mathbf{T} \tilde{\mathbf{A}}
\left(
\begin{array}{cc}
\mathbf{I}_{w-r} & \mathbf{0}\\
\mathbf{0} & \mathbf{0} \\
\end{array} \right) \mathbf{T}^{-1} \mathbf{y}_{i - n_1}.
\end{eqnarray*}
From a comparison of the lower $r$ coordinates, 
it is easy to see that there exist $\mathbb{F}_2$-linear relations among 
$\{ \mathbf{y}_i, \mathbf{y}_{i + n_2 - n_1}, \mathbf{y}_{i +1 - n_1} \}$. 
The Couture--L'Ecuyer dual lattice method gives explicit $\mathbb{F}_2$-linear relations without direct matrix computations, 
and it is applicable not only for Mersenne Twisters defined by (\ref{eqn:MT transition matrix})--(\ref{eqn:MT output transformation2}) 
but also for general $\mathbb{F}_2$-linear generators \textcolor{blue}{(\ref{eqn:transition})--(\ref{eqn:real})}. 
\end{remark}

\section{Birthday spacings tests for non-successive output values} \label{sec:birthday spacings test}

In this section, we report statistical tests for non-successive output values of MT19937. 
\textcolor{blue}{We conduct the birthday spacings test, which was proposed by Marsaglia \cite{Marsaglia1985}, 
further studied in \cite{Knuth:1997:ACP:270146, MR1823110}, 
and implemented in the TestU01 package \cite{MR2404400}.
We consider the techniques that are similar to \cite{LatticeDeng, L'Ecuyer:2004:DRN:961292.961302} 
for some multiple recursive generators (e.g., \cite{Deng:2000:RNG}).}

Following the notations of \cite{MR1823110, MR2723077},
we introduce the testing procedure. 
We fix two positive integers, $n$ and $t$, and  
generate $n$ ``independent'' points $\mathbf{u}_0, \ldots, \mathbf{u}_{n-1}$ 
in the $t$-dimensional hypercube $[0, 1)^t$. 
For the hypercube, 
we partition it into $d^t$ cubic boxes of equal size by dividing $[0,1)$ into $d$ equal segments. 
These boxes are numbered from $0$ to $d^t-1$ in lexicographic order. 
%namely, the box with the lower left corner at 
%$(i_0/d, \ldots, i_{t-1}/d)$ has the number $c = i_0 d^{t-1}  + i_1 d^{t -2}  + \cdots + i_{t-1}$. 
Let $I_1 \leq I_2 \leq \cdots \leq I_n$ be the numbers of the boxes where 
these points have fallen, sorted by increasing order. 
Define the spacings $S_j := I_{j+1} - I_j$, for $j = 1, \ldots, n - 1$. 
Let $Y$ be the total number of collisions of these spacings, i.e., 
the number of values of $j \in \{1, \cdots, n-2\}$ such that $S_{(j + i)} = S_{(i)}$, 
where $S_{(1)}, \ldots, S_{(n-1)}$ are the spacings sorted by increasing order. 
%We can view each points $\mathbf{u}_i$ as a ``person'' with a birthday $I_i$ 
%in a year having $k$ days.
We test the null hypothesis $\mathcal{H}_0$: the PRNG produces i.i.d.\ $U(0,1)$ random variables.
If $d^t$ is large and $\lambda = n^3/(4d^t)$ is not too large,
$Y$ is approximately a Poisson distribution with mean $\lambda$ under $\mathcal{H}_0$.
We generate independent $N$ replications of $Y$, add them, and  
compute the $p$-value by using the sum, which is approximately a Poisson 
distribution with mean $N \lambda$, 
under $\mathcal{H}_0$. %As a rule of thumb, the error of approximation 
%is negligible when $d^t \geq (4N\lambda)^4$(see \cite{MR1823110}). 
%Our experiments satisfy this rule of thumb. 
If $d = 2^v$, note that the $t$-dimensional output with $v$-bit accuracy is tested.

To extract non-successive output values, 
let us consider the $t$-dimensional output vectors constructed as 
\textcolor{red}{$\mathbf{u}_i = (u_{(j_t+1)i + j_1}, \ldots, u_{(j_t+1)i + j_t})$} 
for $i = 0, \ldots, n-1$ with lags $I = \{ j_1, \ldots, j_t \}$. 

%n our experiments, we use the birthday spacings tests implemented in the TestU01 package \cite{MR2404400}, 
%which is a software library for statistical tests for pseudorandom number generators. 

First, we conduct experiments with the parameter set 
$(N, n, d, t) = (5, 20000000, 2^{21}, 3)$, which is just No.~12 of Crush in TestU01.
Then, the three-dimensional output with $21$-bit accuracy is tested. 
%As expected, for the points using successive output values (i.e., $I = \{ 0, 1, 2 \}$) by MT19937, 
%the result appears to be good (see \cite{MR2404400}). 
%On th other hand, 
\textcolor{blue}{We choose $I = \{ 0, 396, 623 \}$. } 
The second row in Table~\ref{mt_birthday} gives right $p$-values for five initial states, 
and all the $p$-values are $< 10^{-15}$.
Thus, MT19937 with $I = \{ 0, 396, 623 \}$ decisively 
fails the birthday spacings tests, \textcolor{red}{in accordance with} low-weight minimal 
$\mathbb{F}_2$-linear relations with $21$-bit accuracy in Section~\ref{sec:linear relation for MT}. 
It takes approximately eight minutes on an Intel Core i7-3770 3.90 GHz computer (with the gcc compiler with 
\textcolor{blue}{the} {\tt -O3} optimization flag on a Linux operating system) for each test.

Next, we \textcolor{blue}{also} conduct the birthday spacings tests five times
with the parameter set $(N, n, d, t) =  (5, 15000000, 2^{12}, 5)$. 
Thus, the five-dimensional output with $12$-bit accuracy is tested. 
The third row in Table~\ref{mt_birthday} shows small deviations 
for the points with $I = \{ 0, 396, 623, 792, 1246 \}$. 
It takes approximately 11 minutes for each test in the above environment. 
Furthermore, we focus on the five-term $\mathbb{F}_2$-linear relation (\ref{eqn:5-term}). 
\textcolor{blue}{The last row of Table~\ref{mt_birthday} shows similar deviations of the birthday spacings tests 
for $(N, n, d, t) = (5, 20000000, 2^{21}, 3)$ and $I = \{ 0, 792, 1246 \}$. 
In general, note that the discovery of such bad lag sets $I $ is not trivial when we use traditional criteria, such as $k(v)$ and $N_1$.}

\begin{table}[h]
\begin{center}
\caption{The $p$-values on the birthday spacings tests with selected lags $I$ for MT19937.}
{\small
\begin{tabular}{c||c|c|c|c|c} \hline
{} & {1st} & {2nd} & {3rd} & {4th} & {5th} \\ \hline  \hline
{$I = \{ 0, 396, 623 \}$} & $1.7 \times 10^{-16}$ & $1.8 \times 10^{-18}$ & $3.1 \times 10^{-21}$ & $8.5 \times 10^{-17}$ & $1.4 \times 10^{-21}$ \\ \hline 
{$I = \{ 0, 396, 623, 792, 1246 \}$} & $4.8 \times 10^{-5}$ & $0.01$ & $1.5 \times 10^{-4}$ & $1.1 \times 10^{-4}$ & $8.5  \times 10^{-4}$ \\ \hline 
{$I = \{ 0, 792, 1246 \}$} & $2.0 \times 10^{-4}$ & $3.0 \times 10^{-7}$ & $3.9 \times 10^{-6}$ & $9.2 \times 10^{-5}$ & $4.5  \times 10^{-6}$ \\ \hline 
\end{tabular}
}
\label{mt_birthday}
\end{center}
\end{table}

\section{Some variants of Mersenne Twister generators} \label{sec:other generators}

We analyze other $\mathbb{F}_2$-linear generators 
whose periods are $2^{19937}-1$ (i.e., $p = 19937$ and $w =32$). 
First, we investigate the WELL generators \cite{PannetonLM06}, 
which are variants of Mersenne Twister and 
have almost optimal $k(v)$ and $N_1$. 
A key idea of the improvement is to construct a more complicated transition matrix 
$\mathbf{A}$ in (\ref{eqn:MT transition matrix}) by using linear recurrences with a double loop, 
instead of that with a single loop \textcolor{red}{in Remark~\ref{remark:tempering}} (see \cite{MR2743921} for details). 
Panneton et al. \cite{PannetonLM06} list the parameters of 
WELL generators WELL19937a, which has $\Delta =4$, 
and WELL19937c, which has $\Delta=0$ (i.e., maximally equidistributed) 
by adding the Matsumoto--Kurita tempering \cite{MK1994}. 
The author \cite {Harase2009} also introduced more simplified temperings 
required to attain the maximal equidistribution. 
(We discovered a typo in Table~4 of \cite{Harase2009}. 
The bitmask {\tt 4202000} should be corrected to {\tt 4202010}.)
All of the WELL generators have $N_1 = 8585$ and $N_v >9500$ ($2 \leq v \leq 32$), 
so that we have no low-weight minimal $\mathbb{F}_2$-linear relations \textcolor{blue}{in (\ref{eqn:minimal}). 
This means that all the minimal $\mathbb{F}_2$-linear relations lie on a large number of coordinates (i.e., $\geq \lceil N_v/v \rceil$). 
In other words, among the $((k(v)+1) \times v)$ bits 
that correspond to the image of the map of (\ref{eqn:code}) with $k = k(v)+1$, 
there do not exist $\mathbb{F}_2$-linear relations concentrating on low-dimensional projections. 
Thus, as far as we focus on the $((k(v)+1) \times v)$ bits (in the same setting as MT19937), 
there are no corresponding bad lag sets $I = \{ j_1, \ldots, j_t \}$ for which the birthday spacings tests are rejected, because $t$ is too large. 
In this respect, the WELL generators are much superior to MT19937.}

As another improvement, the author \cite{Harase2009} constructed a maximally equidistributed Mersenne Twister 
MEMT19937 by replacing the tempering in \textcolor{red}{(\ref{eqn:MT output transformation1}) and (\ref{eqn:MT output transformation2})} 
with a more complicated output transformation $\mathbf{B}$, 
which consists of a linear combination of some part of the state vector. 
However, the author recently noted the following drawbacks: 
(i) MEMT19937 is sometimes slower than WEL19937a on some recent platforms; 
(ii) MEMT19937 has a small value $N_{32} = 26$. 
Again, we search for a better parameter set by using \textcolor{blue}{\cite{Shin2011141}}.   
By trial-and-error, as well as in \cite{Harase2009}, we obtain a simple linear transformation:
\textcolor{red}{
\begin{eqnarray*}
\mathbf{z} & \leftarrow & \mathbf{m}_i,\\
\mathbf{z} & \leftarrow & \mathbf{z} \ \oplus \ (\mathbf{m}_{i - 473} \ \& \ {\tt b219beab}),\\
\mathbf{z} & \leftarrow & \mathbf{z} \ \oplus \ (\mathbf{z} \ll 8),\\
\mathbf{z} & \leftarrow & \mathbf{z} \ \oplus \ (\mathbf{z} \ll 14),\\
\mathbf{y}_i & \leftarrow & \mathbf{z} \ \oplus \ (\mathbf{m}_{i - 588} \ \& \ {\tt 56bde52a}),
\end{eqnarray*}
}
where \textcolor{red}{the state vector $\mathbf{x}_i$ is decomposed into 
$\mathbf{x}_i =: (\mathbf{m}_i, \mathbf{m}_{i-1}, \ldots, \mathbf{m}_{i-n_1+2}, {\rm trunc}_{w-r}(\mathbf{m}_{i-n_1+1}))$,}
$\oplus$ denotes bitwise exclusive-or, $\&$ bitwise AND, $(\mathbf{z} \ll s_1)$ the $s_1$ bit left-shift, 
$(\mathbf{z} \gg s_2)$ the $s_2$ bit right-shift, and \textcolor{red}{${\tt b219beab}$ and ${\tt 56bde52a}$} are hexadecimal notations. 
We replace the tempering \textcolor{red}{(\ref{eqn:MT output transformation1}) and (\ref{eqn:MT output transformation2})} 
with the above, and 
we then obtain a maximally equidistributed generator 
\textcolor{red}{that has fewer operations than MEMT19937 and has almost the same number of operations as MT19937}. 
We name this MEMT19937-II. 
MEMT19937-II has $N_1 = 135$ and $N_v >9000$ ($2 \leq v \leq 32$), 
namely, $N_v$'s significantly increase. 
This generator passes the Big Crush suits in the TestU01 statistical test library, 
with the exception of two linear complexity tests (the test number 80 and 81), 
and these rejections are common among $\mathbb{F}_2$-linear generators, 
such as the Mersenne Twister and WELL generators (see \cite{MR2404400}). 
\textcolor{red}{Regarding linear complexity tests, Deng, Lu, and Chen \cite{MR2660671} recently
proposed the combined Mersenne Twister CMT19937, 
which passes the battery of tests including linear complexity tests in TestU01, 
by adding the outputs as real numbers (modulo $1$), as described in \cite{Deng2008401}.}
Now, MEMT19937-II is available at the author's homepage \textcolor{blue}{\url{http://www3.ocn.ne.jp/~harase/megenerators2.html}}.

Here, we measure the speed to generate $10^{\textcolor{blue}{9}}$ 32-bit unsigned integers on 
two different 64-bit CPUs: Intel Core i7-3770 3.90 GHz and AMD Phenom II X6 1045T 2.70 GHz. 
We use the gcc compiler with \textcolor{blue}{the} {\tt -O3} optimization flag on Linux operating system\textcolor{blue}{s}. 
In comparison, we also conduct experiments with MT19937ar, Shawn Cokus' other implementation MT19937ar-cok 
(both are obtained from \cite{MT_home}), \textcolor{red}{WELL19937a, and MEMT19937}. 
Table~\ref{benchmark} gives a summary of the CPU time (in seconds) and %the total dimension defects 
\textcolor{blue}{$\Delta$}.
MT19937ar-cok is the fastest, 
but the maximally equidistributed generator MEMT19937-II is comparable to \textcolor{red}{or faster than} MT19937ar on the two platforms. 

\begin{table}[h]
\begin{center}
\caption{CPU time (sec) taken to generate $10^{\textcolor{blue}{9}}$ pseudorandom numbers and total dimension defects $\Delta$.}
\begin{tabular}{c||c|c|c} \hline
{} & {Intel Core i7} & {AMD Phenom II} & $\Delta$ \\ \hline \hline
{MT19937ar-cok} & $3.126$ & $4.199$ & $6750$ \\ \hline
{MEMT19937-II} & \textcolor{red}{$4.093$} & \textcolor{red}{$5.841$} & $0$ \\ \hline 
{MT19937ar} & $4.771$ & $6.106$ & $6750$ \\ \hline
{WELL19937a} & $4.953$ & $6.678$ & $4$ \\ \hline
\textcolor{red}{MEMT19937} & \textcolor{red}{$5.111$} & \textcolor{red}{$7.812$} & \textcolor{red}{$0$} \\ \hline
\end{tabular}
\label{benchmark}
\end{center}
\end{table}

Finally, we conduct \textcolor{blue}{the} birthday spacings tests for non-successive output values of MEMT19937-II.  
Table~\ref{memt_birthday} gives a summary of the birthday spacings tests 
with the same parameter sets 
for three- and five-dimensional non-successive outputs in Section~\ref{sec:birthday spacings test}. 
A simple improvement of $\mathbf{B}$ also increases $N_v$'s, 
\textcolor{blue}{and the bad lag sets $I$ disappear, so that the} deviations of tests are avoided.

\textcolor{red}{
\begin{table}[h]
\begin{center}
\caption{The $p$-values of the birthday spacings tests five times for MEMT19937-II.}
\begin{tabular}{c||c|c|c|c|c} \hline
{} & {1st} & {2nd} & {3rd} & {4th} & {5th} \\ \hline  \hline
{$I = \{ 0, 396, 623\}$} & $0.11$ & $0.32$ & $0.87$ & $0.63$ & $0.03$ \\ \hline 
{$I = \{ 0, 396, 623, 792, 1246\}$} & $0.87$ & $0.80$ & $0.28$ & $0.33$ & $0.10$\\ \hline 
{$I = \{ 0, 792, 1246 \}$} & $0.76$ & $0.63$ & $0.85$ & $0.78$ & $0.67$ \\ \hline 
\end{tabular}
\label{memt_birthday}
\end{center}
\end{table}
}
\section{Conclusions} \label{sec:conclusions}
\textcolor{red}{To assess $\mathbb{F}_2$-linear pseudorandom number generators, w}e have discussed the relationship between $\mathbb{F}_2$-linear relations and the Couture--L'Ecuyer dual lattices, 
and have proposed the new figure of merit $N_v$ based on 
the minimum weight of $\mathbb{F}$-linear relations for most significant $v$ bits 
in $(k(v)+1)$-dimensional output vectors. 
We presented an algorithm for computing $N_v$, and applied it to the Mersenne Twister MT19937.
The experimental results showed that MT19937 has low-weight $\mathbb{F}_2$-linear relations, 
and is rejected for birthday spacings tests with specific lags, 
and the reason appears to be the existence of such $\mathbb{F}_2$-linear relations. 
To avoid such phenomena, some improvements of Mersenne Twister generators were also discussed. 

The above result of MT19937 will not affect most 
Monte Carlo simulations because real simulations are 
not synchronized with bad lag sets $I$. 
%However, there is still a possibility of unfavorable performance 
%when simulations are synchronized with the bad sets (see \cite{MR1478040}). 
However, in general, when strange phenomena occur in simulations, 
and when these are due to the regularity of pseudorandom number generators, 
it is significantly difficult for experimenters to determine the reason 
for occurrence of the strange phenomena. 
Thus, when designing pseudorandom number generators, it is important to 
perform assessments in as many situations as possible beforehand.
In this respect, the Couture--L'Ecuyer lattices are powerful tools 
not only for computing $k(v)$ but also for detecting 
hidden structural defects of $\mathbb{F}_2$-linear generators. 

\subsection*{Acknowledgments}
\textcolor{red}{The author wishes to express his gratitude to Professor Makoto Matsumoto at Hiroshima University and 
Professor Syoiti Ninomiya at Tokyo Institute of Technology for continuous encouragement. 
The author is also grateful to the anonymous referees for many useful comments.} 
This work was partially supported by JSPS Research Fellowships for Young Scientists, 
JSPS Grant-In-Aid \#21654017,
 \#23244002, and Global COE Program ``The Research and Training Center for New Development in Mathematics'' from MEXT, Japan.
%% The Appendices part is started with the command \appendix;
%% appendix sections are then done as normal sections
%% \appendix

%% \section{}
%% \label{}

%% References
%%
%% Following citation commands can be used in the body text:
%% Usage of \cite is as follows:
%%   \cite{key}         ==>>  [#]
%%   \cite[chap. 2]{key} ==>> [#, chap. 2]
%%

%% References with bibTeX database:

\bibliographystyle{model1b-num-names.bst}
\bibliography{latticebib}

%% Authors are advised to submit their bibtex database files. They are
%% requested to list a bibtex style file in the manuscript if they do
%% not want to use elsarticle-num.bst.

%% References without bibTeX database:

% \begin{thebibliography}{00}

%% \bibitem must have the following form:
%%   \bibitem{key}...
%%

% \bibitem{}

% \end{thebibliography}

\end{document}